\newtheorem{lemma}{Lemma}
\newtheorem{theorem}{Theorem}
\newtheorem{corollary}{Corollary}
\newtheorem{example}{Example}
\begin{document}
\title{\LARGE \bf A Technique for Designing Stabilizing Distributed Controllers with Arbitrary Signal Structure Constraints}
\author{Anurag Rai and Sean Warnick\\Information and Decision Algorithms Laboratories\\Brigham Young University\\{\tt anurag1985@gmail.com, sean.warnick@gmail.com}}
\maketitle

\begin{abstract}
This paper presents a new approach to distributed controller design that exploits a partial-structure representation of linear time invariant systems to characterize the structure of a system.  This partial-structure representation, called the dynamical structure function, characterizes the {\em signal structure}, or open-loop causal dependencies among manifest variables, capturing a significantly richer notion of structure than the sparsity pattern of the transfer function.  The design technique sequentially constructs each link in an arbitrary controller signal structure, and the main result proves that the resulting controller is either stabilizing or no controller with the desired structure can stabilize the system.  
\end{abstract}

\section{Introduction: The Meaning of Structure}

Distributed controller design concerns the imposition of architectural constraints on a feedback controller while attempting to stabilize, and possibly optimize, the closed-loop performance of a given system, called the {\em plant}.  The problem only arises when the plant is multi-input and multi-output, and the standard notion of architectural constraints implies that certain elements of the controller transfer function matrix are forced to be zero.

Although the sparsity pattern of a transfer function is certainly one notion of a system's structure, it is typically the weakest form of system structure considered.  There are other notions of system structure, such as the interconnection pattern of subsystems or the sparsity pattern of a state space realization that are stronger structural concepts \cite{enoch:math_rel1,enoch:math_rel2}.  Here we say they are stronger structural concepts in the sense that the interconnection of subsystems or a particular state space realization determines the sparsity pattern of the associated transfer function, but not the other way around.  

In this paper we consider another notion of structure, called the signal structure of the system, that is both stronger than the sparsity pattern of the transfer function but weaker than the sparsity pattern of the system's state space realization.  If we use these two system representations as extremes, suggesting that the sparsity pattern of the state realization is the {\em complete computational structure} of the system while the sparsity pattern of the transfer function may contain little (if any) structural information, then the signal structure is squarely between the two in terms of its structural informativity.
 
The signal structure is encoded by a representation of linear time invariant systems called the dynamical structure function (DSF) \cite{TAC08}.  Since all representations of the system, whether a state realization, DSF, or transfer function, describe the system's behavior or dynamic response to inputs equally well, these representations really differ in how much structural information they convey about the system.  As a result, the DSF is a {\em partial-structure} representation of the system.

Although these ideas will be made precise in the sequel, intuitively the system's DSF describes the {\em open-loop} causal dependencies among manifest variables (inputs and outputs), whereas the transfer function describes the {\em closed-loop} dependencies from inputs to outputs.  Thus, while a DSF may be intricately structured, its corresponding transfer function may be fully connected, essentially exhibiting no particular structure (see Figure 1).  This is why many interesting distributed control problems are not described well by imposing sparsity constraints on the controller's transfer function.

\begin{figure}[ht]
\centering
\subfigure[]{
\includegraphics[trim = 0 .4in 0 .2in, scale=.33]{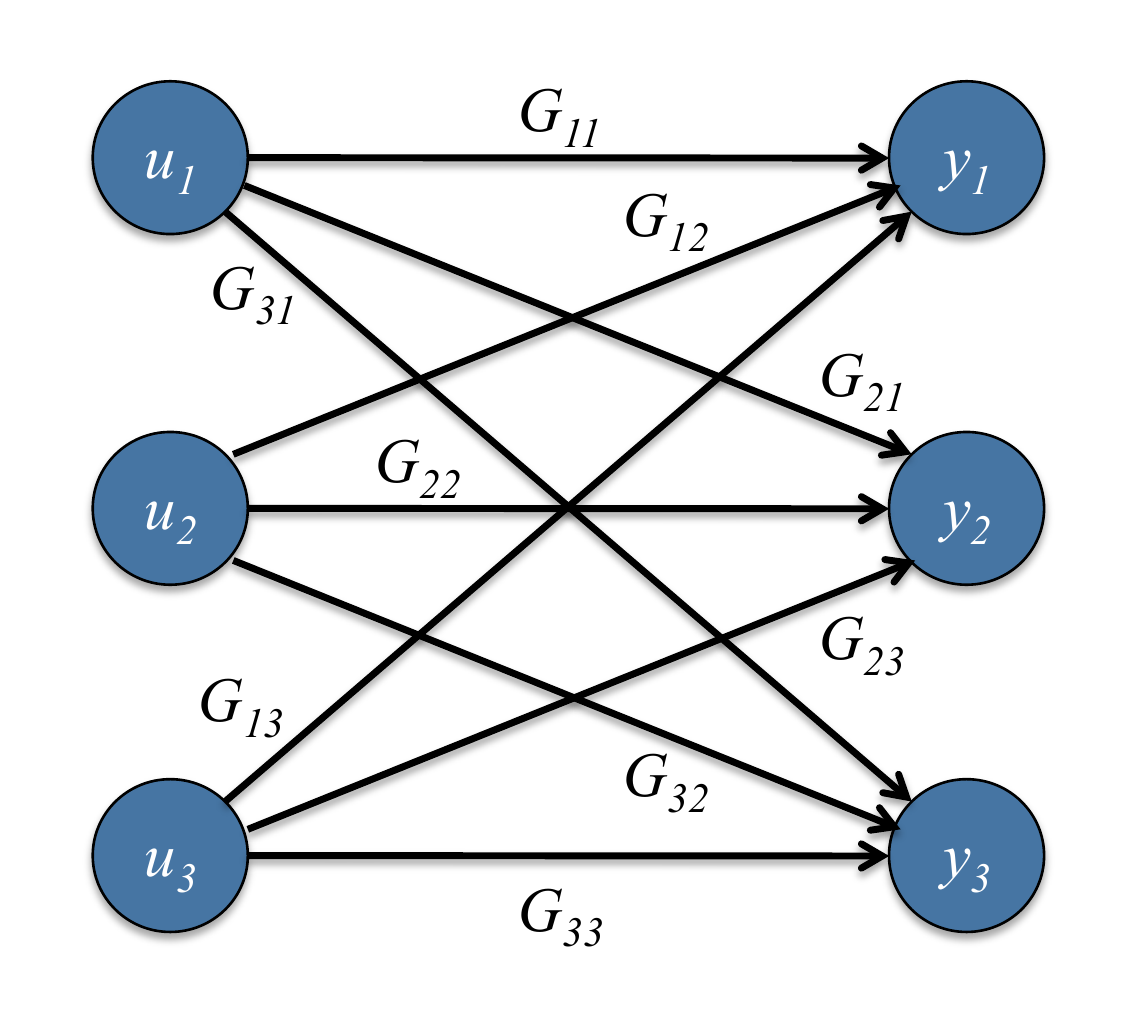}
\label{fig:ctrl_tf}
}
\subfigure[]{
\includegraphics[trim = 0 .4in 0 .2in, scale=.43]{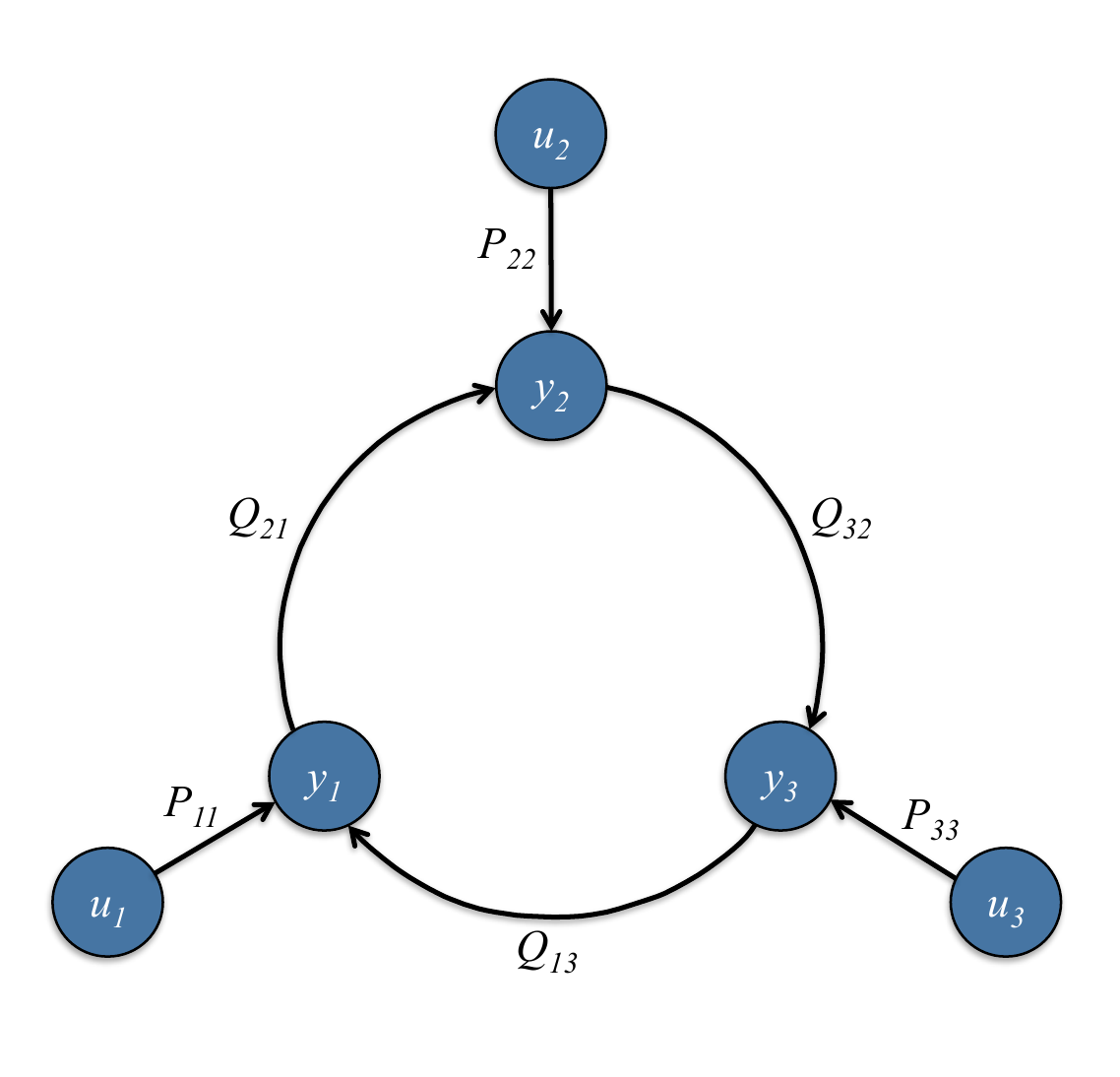}
\label{fig:ctrl_dsf}
}
\caption{Two distinct notions of structure for the same system.  The top figure indicates that the transfer function, evidently a $3\times 3$ matrix $G(s)$, is full and unstructured, while the bottom figure indicates that the signal structure, represented by the dynamical structure function with two $3\times 3$ matrices $Q(s)$ and $P(s)$ where $G(s)=(I-Q(s))^{-1}P(s)$,  is sparse and definitively structured.  Note that the bottom figure may represent communication links, and since there is a pathway from every input to every output, the associated transfer function may be full, as in the top figure.}
\label{fig:ctrl_required}
\end{figure}

This paper describes a technique for designing stabilizing controllers with a particular signal structure for a given plant, or demonstrating that no such controller exists.  The next section discusses related work, while the following section details mathematical preliminaries regarding dynamical structure functions as a partial structure representation of linear time invariant systems.  We then present the design procedure and the main result, which proves that the design procedure delivers a stabilizing controller with the desired structure if possible.  Examples and conclusions follow.

\section{Related Work}

One of the first results on the existence of a decentralized controller was given in \cite{wang_davison}. It developed the idea of {\em fixed modes} and showed that a decentralized controller exists if and only if the system had no unstable fixed modes. More precisely, it showed that a system $(A,B,C)$ is stabilizable with a diagonal or block diagonal controller $K$  if and only if $A-BKC$ does not have any unstable eigenvalues that cannot be moved by changing the nonzero entries of $K$. This result was extended in \cite{siljak:decentralized_control} by showing that this is in fact true for any distributed controller  $K$, not just for diagonal and block diagonal. The authors also present methods to synthesize the decentralized stabilizing controller.

In \cite{lall:qi} the authors show that if the structure of the transfer function matrices of the plant and the controller meets the {\em quadratic invariance} condition then the problem of synthesizing the optimal controller is convex. In \cite{lall:qi2} the authors show that the quadratic invariance condition is necessary and sufficient for the problem of synthesizing the optimal controller to be convex. This method requires a decentralized stabilizing controller to initialize the convex optimization problem, so to complete the process, an algorithm to obtain such a controller is provided in \cite{nuno:qi}.

A different type of distributed controller design has been proposed in \cite{nicola:reliazable_ctrl}. The approach taken in this paper enforces the controller to have the same network structure as the plant. The structure in this paper is defined as the constraint on the interconnection of sub-systems, or the subsystem structure. Hence, the plant and the controller can share the same communication network reducing the implementation cost. An algorithm to synthesize a sub-optimal controller with such structure  is also provided in this paper.

In this paper we introduce a similar, but a more general controller design problem. Instead of the controller having to have the same structure as the plant, we allow it to have any structure. Also, the structure is defined as a constraint on the signal structure. In Figure \ref{fig:ctrl_required} we show an example of a plant and a corresponding controller structure that we might want to have. When a controller has such a structure, we can see that all the controller units affect each other directly or indirectly, hence, the controller transfer function matrix is completely full. As a result, using the usual approach of placing binary constraint on the controller transfer function will produce a centralized controller as shown in \ref{fig:ctrl_obtained}. Also, most of these setups do not meet the quadratic invariance criterion. In this paper, we will show that these controllers can be obtained by placing binary constraints on the dynamical structure function of the controller.

\begin{figure}[ht]
\centering
\includegraphics[scale=.5]{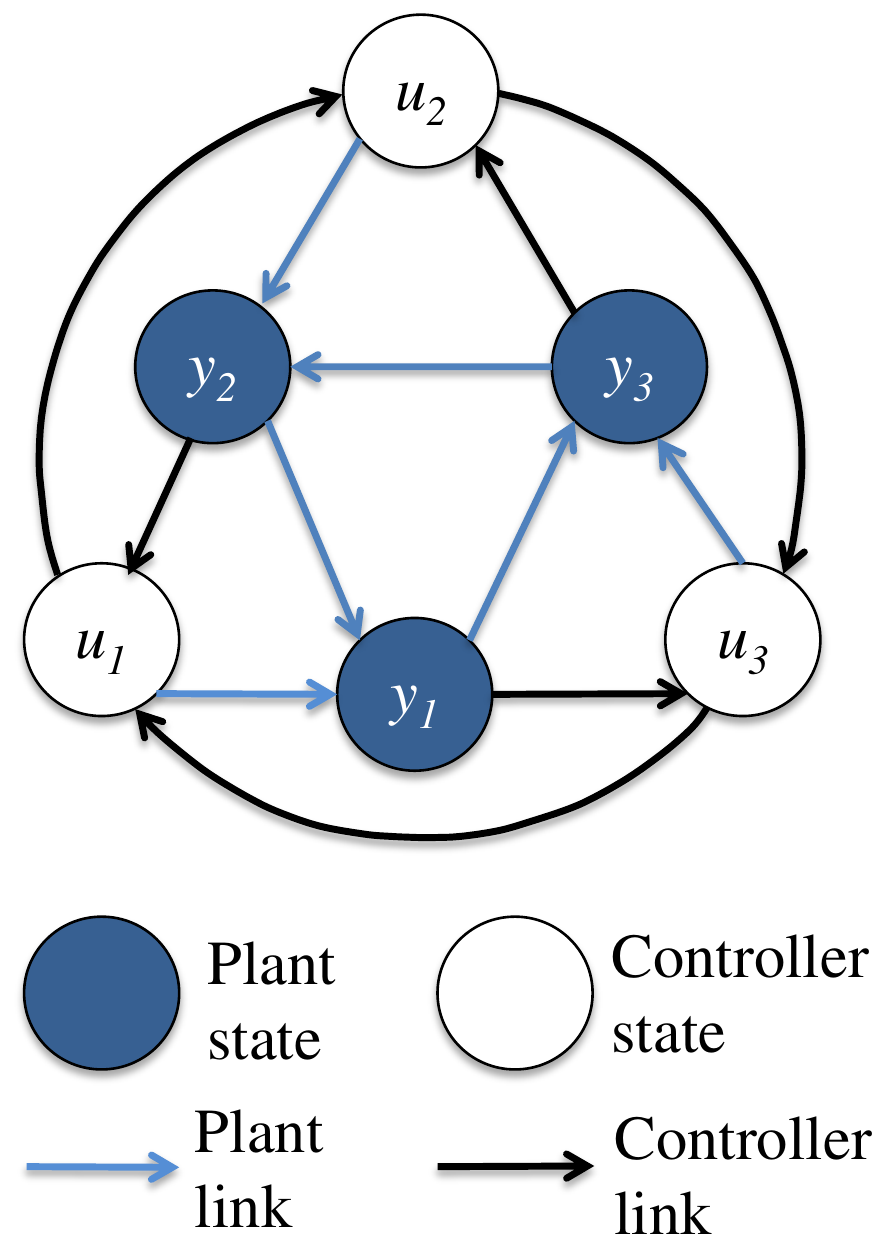}
\caption{Plant with the signal structure as in Figure \ref{fig:ctrl_dsf} interconnected with controller with a particular desired distributed structure.}
\label{fig:ctrl_required}
\end{figure}

\begin{figure}[ht]
\centering
\includegraphics[scale = .5]{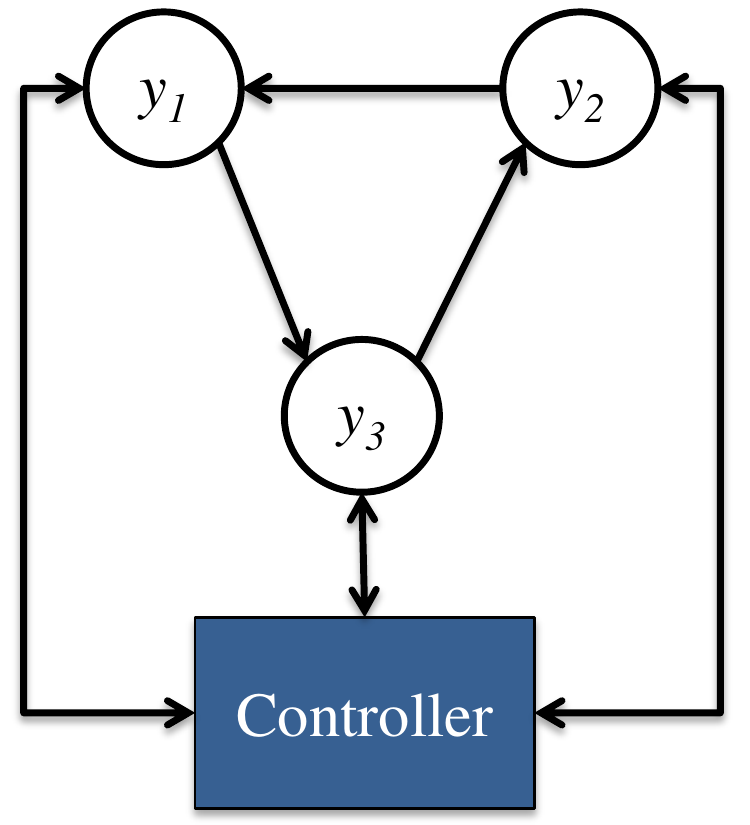}
\caption{Since the desired signal structure for the controller in Figure \ref{fig:ctrl_required} yields a full transfer function, other design methods yield a centralized controller.}
\label{fig:ctrl_obtained}
\end{figure}

In \cite{sequential1}, \cite{sequential2}, etc., sequential design methods have been used to construct decentralized controllers. Although these methods do not produce the optimal controller, they provide an efficient method to synthesize a nominal stabilizing controller with a desired decentralized sparsity pattern in its transfer function. We will use a similar strategy to design a stabilizing controller with constraints on the signal structure in Section \ref{sec:main}. In the event that this process cannot produce a stabilizing controller, we will show that there is no controller of the given signal structure that stabilizes the plant.

\section{dynamical structure functions}
dynamical structure functions is a representation for linear time invariant systems developed in \cite{sean:dsf}. It gives a partial representation of the structure of the system, namely how the inputs affect the manifest states and how the manifest states affect each other. We also call it this representation the signal structure of the system. A brief derivation is provided below.

Let us consider a state-space LTI system
\begin{align}\label{eqn:trans_sys}\begin{bmatrix}\dot{y} \\ \dot{x} \end{bmatrix} &= \begin{bmatrix}A_{11} & A_{12} \\ A_{21} & A_{22} \end{bmatrix} \begin{bmatrix}y \\ x \end{bmatrix}+ \begin{bmatrix}B_1 \\ B_2 \end{bmatrix} u \\
y &= \begin{bmatrix}I & 0 \end{bmatrix} \begin{bmatrix}y\\ x \end{bmatrix}, \nonumber
\end{align}
Here $y$ are the states that are measured, and $x$ are the hidden states. Note that the assumption in the second equation is made for notational convenience. For a detailed derivation please see \cite{sean:csm}. 

Now, taking Laplace Transforms of the signals in (\ref{eqn:trans_sys}), we get
\begin{align}\label{eqn:laplace_sys}\begin{bmatrix}sY \\ sX \end{bmatrix} &= \begin{bmatrix}A_{11} & A_{12} \\ A_{21} & A_{22} \end{bmatrix} \begin{bmatrix}Y \\ X \end{bmatrix}+ \begin{bmatrix}B_1 \\ B_2 \end{bmatrix} U.
\end{align}
Solving for X in the second equation of \ref{eqn:laplace_sys} gives $$X=(sI-A_{22})^{-1} A_{21}Y + (sI-A_{22})^{-1}B_2U$$
Substituting into the first equation of (\ref{eqn:laplace_sys}) we get,
$$sY = WY + VU,$$ where $W=A_{11} + A_{12}(sI-A_{22})^{-1}A_{21}$ and $V=A_{12}(sI-A_{22})^{-1}B_2 + B_1$. 
Let $D$ be a diagonal matrix with the diagonal entries of $W$. Then, $$(sI-D)Y = (W-D)Y+VU.$$Now we can rewrite this equation as, \begin{equation}\label{eqn:dsf} Y=QY+PU, \end{equation} where 
$$Q = (sI-D)^{-1}(W-D)$$ and $$P=(sI-D)^{-1}V.$$ 
The matrix $Q$ is a matrix of transfer functions from $Y_i$ to $Y_j$, $i\ne j$, or relating each measured signal to the other measured signals. A nonzero entry in $Q_{ji}$ says that the signal $Y_i$ affects the signal $Y_j$ either directly or through some hidden states. Note that $Q$ is zero on the diagonal and either zero or a strictly proper transfer function on the off diagonal.  The matrix $P$ is a matrix of zeros or strictly proper transfer functions from each input to each output without depending on any additional measured states. Together, the pair $(Q(s),P(s))$ is called the {\em dynamical structure function} for system (\ref{eqn:trans_sys}). The transfer function matrix for this system is given by $$G = (I-Q)^{-1}P = C(sI-A)^{-1}B.$$ Hence, DSF can also be seen as an interconnection of the systems $Q$ and $P$ as shown in Figure \ref{fig:dsf as feedback}. Also, note that if $Q=0$, $G=P$.

\begin{figure}[ht]
\centering
\includegraphics[scale=.7]{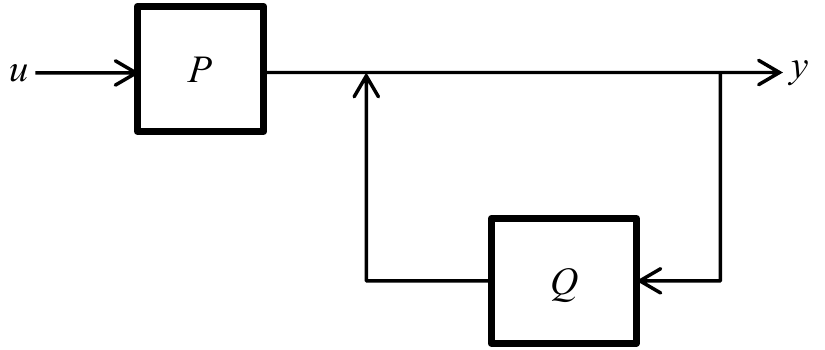}
\caption{DSF can be viewed as an interconnection of two systems characterized by the transfer function matrices $Q$ and $P$, where $Q$ is a hollow transfer function matrix. The transfer function from $u$ to $y$ is given by $G = (I-Q)^{-1}P$. }
\label{fig:dsf as feedback}
\end{figure}

\begin{example}
Let us consider a system given by the following state space equations
\begin{align*}
\dot{\begin{bmatrix}x_1 \\ x_2 \\ x_3\end{bmatrix}} &= \begin{bmatrix}1 & 0 & 3 \\  0 & 2 & 3 \\1 &3 &2\end{bmatrix}\begin{bmatrix}x_1 \\ x_2 \\ x_3\end{bmatrix} + \begin{bmatrix} 1 & 0\\0 & 1\\0 & 0\end{bmatrix}\\
y &= \begin{bmatrix}1 & 0 & 0\\0 & 1 & 0\end{bmatrix}\begin{bmatrix}x_1 \\ x_2 \\ x_3\end{bmatrix}
\end{align*}
The corresponding DSF is given by 
\begin{align*}Q &= \left(\begin{array}{cc} 0 & -\frac{9}{ - s^2 + 3\, s + 1}\\ \frac{3}{\left(s + 1\right)\, \left(s - 5\right)} & 0 \end{array}\right) \text{ and } \\P &= \left(\begin{array}{cc} -\frac{s - 2}{ - s^2 + 3\, s + 1} & 0\\ 0 & \frac{1}{2\, \left(s + 1\right)} + \frac{1}{2\, \left(s - 5\right)} \end{array}\right).\end{align*}
Here, $x_1$ and $x_3$ are the manifest states, and $x_3$ is the hidden shared state. $x_3$ is called the shared state because it is shared between the two links in $Q$.
\end{example}

In this paper, the structure of a controller is defined as a sparsity constraint on the $Q$ matrix; we assume, for the ease of exposition, that the $P$ matrix to be diagonal. We will use the binary matrices $(Q^{bin}, P^{bin})$ to represent the sparsity of the desired controller. The $(i,j)^{th}$ element of $Q^{bin}$, $q^{bin}_{ij} = 1$ if the $j^{th}$ controller unit can communicate with the $i^{th}$ controller unit. Similarly, $p^{bin}_{ij}=1$ if the $j^{th}$ plant unit communicates with the $i^{th}$ controller unit.  $K^{bin}$ represents a structural constraint on the transfer function of the controller. 

\begin{example}
Using this notation, the desired controller in Figure 1 is given by:
$$P^{bin}=\begin{bmatrix}1& 0 & 0  \\ 0 & 1& 0  \\ 0 & 0 & 1 \end{bmatrix}$$ and 
$$Q^{bin}=
\begin{bmatrix}0 & 0 & 1 \\ 1 & 0 &  0 \\ 0 & 1 & 0\end{bmatrix}.
$$
\renewcommand\arraystretch{1.2}
Let us assume that the transfer function $Q_{ij} = q_{ij}$ if $Q^{bin}_{ij}$ = 1, and $Q_{ij} =0$ otherwise, and similarly $P_{ij} = p_{ij}$ if  $P^{bin}_{ij}$ = 1, and $P_{ij} =0$ otherwise. The corresponding transfer function matrix for this controller is given by, $(Q^{bin}, P^{bin})$
\begin{align*}
K &= (I-Q_k)^{-1}P_k \\
&=\left[\begin{array}{ccc} 
-\frac{{p_{11}}}{{q_{13}}\, {q_{21}}\, {q_{32}} - 1} 				& -\frac{{p_{12}}\, {q_{13}}\, {q_{32}}}{{q_{13}}\, {q_{21}}\, {q_{32}} - 1} 		& -\frac{{p_{13}}\, {q_{13}}}{{q_{13}}\, {q_{21}}\, {q_{32}} - 1}\\ 
-\frac{{p_{11}}\, {q_{21}}}{{q_{13}}\, {q_{21}}\, {q_{32}} - 1} 			& -\frac{{p_{12}}}{{q_{13}}\, {q_{21}}\, {q_{32}} - 1} 						& -\frac{{p_{13}}\, {q_{13}}\, {q_{21}}}{{q_{13}}\, {q_{21}}\, {q_{32}} - 1}\\ 
-\frac{{p_{11}}\, {q_{21}}\, {q_{32}}}{{q_{13}}\, {q_{21}}\, {q_{32}} - 1} 	& -\frac{{p_{12}}\, {q_{32}}}{{q_{13}}\, {q_{21}}\, {q_{32}} - 1} 				& -\frac{{p_{13}}}{{q_{13}}\, {q_{21}}\, {q_{32}} - 1} 
\end{array}\right]
\end{align*}

We can see that this transfer function matrix is full, hence this controller cannot be obtained by placing binary constraints on the transfer function matrix. 

Quadratic Invariance results presented in \cite{lall:qi} provide a method to place other types of constraint on the transfer function. For the structure given in this example the constraints are as follows: 
\begin{align} \label{constraint}
\frac{k_{21}}{k_{11}} = \frac{k_{32}}{k_{13}}, \frac{k_{31}}{k_{21}} = \frac{k_{32}}{k_{22}}, \text{ and } \frac{k_{12}}{k_{32}} = \frac{k_{13}}{k_{33}}
\end{align}
Let us assume that plant has the structure as shown in Figure \ref{fig:ctrl_required}. If $\bar{p}_{ij}$ and $\overline{q}_{ij}$ represents the transfer functions on the DSF of the plant, the transfer function matrix for the plant is given by 
$$G = \left[\begin{array}{ccc} -\frac{\bar{p}_{11}\bar{q}_{12}\bar{q}_{32}}{\bar{q}_{12}\bar{q}_{31}\bar{q}_{32} - 1}      &     -\frac{\bar{p}_{22}}{\bar{q}_{12}\bar{q}_{31}\bar{q}_{32} - 1}  &  - \frac{\bar{p}_{33}\bar{q}_{12}}{\bar{q}_{12}\bar{q}_{31}\bar{q}_{32} - 1}\\
     -\frac{\bar{p}_{11}\bar{q}_{32}}{\bar{q}_{12}\bar{q}_{31}\bar{q}_{32} - 1} & -\frac{\bar{p}_{22}\bar{q}_{31}\bar{q}_{32}}{\bar{q}_{12}\bar{q}_{31}\bar{q}_{32} - 1} &         -\frac{\bar{p}_{33}}{\bar{q}_{12}\bar{q}_{31}\bar{q}_{32} - 1}\\
           -\frac{\bar{p}_{11}}{\bar{q}_{12}\bar{q}_{31}\bar{q}_{32} - 1} &     -\frac{\bar{p}_{22}\bar{q}_{31}}{\bar{q}_{12}\bar{q}_{31}\bar{q}_{32} - 1} & -\frac{\bar{p}_{33}\bar{q}_{12}\bar{q}_{31}}{\bar{q}_{12}\bar{q}_{31}\bar{q}_{32} - 1}\end{array}\right].$$
By computing the product $Z = KGK$ we can see that $$\frac{z_{21}}{z_{11}} \ne \frac{z_{32}}{z_{13}}.$$ This violates the constraints given in Equation (\ref{constraint}), hence, the plant and the controller are not quadratically invariant and the algorithm in \cite{lall:qi}  cannot be used to construct such controllers.
\end{example}
\renewcommand\arraystretch{1}

\section{Main Result}\label{sec:main}

In this section, we present a procedure to design a controller $(Q,P)$ with a structure given by $(Q^{bin}, P^{bin})$ to stabilize a plant with the transfer function matrix $G$. The procedure is as follows:
\\{\bf Procedure $\mathbb{P}$}
\begin{enumerate}
\item Choose an undesigned link $p_{ij}$ such that $p^{bin}_{ij} = 1$
\item Design $p_{ij}$ to stabilize $g_{ji}$ such that there is no pole zero cancellation in $PG$. That is, the controller link is designed such that it stabilizes the transfer function it sees, and there is no pole-zero cancellation.
\item After adding $p_{ij}$, if the closed loop system $(G,P)$ is still unstable, repeat for all $p_{xy}$,  $p^{bin}_{xy}=1$.
\item If the closed loop system $S$, formed by adding $P$ in feedback with $G$, is still unstable, add links in $Q^{bin}$ such that there is no pole-zero cancellation between $Q$ and $S$.
\end{enumerate}

\begin{theorem} 
Given a transfer function matrix, $G$, and a desired signal structure for a feedback controller characterized by $(Q^{bin}, P^{bin})$, Procedure  $\mathbb{P}$ either delivers a stabilizing controller with the desired structure or no such controller exists.
\end{theorem}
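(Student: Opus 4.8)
The plan is to prove the two halves of the disjunction separately. The first half — that any controller actually delivered by Procedure $\mathbb{P}$ is stabilizing — is immediate from the construction, since the procedure terminates successfully only at the moment it has verified that the closed loop formed by $(G,P)$, and then by $(G,(I-Q)^{-1}P)$, has all its poles in the open left half plane. The entire weight of the theorem therefore rests on the contrapositive of the second half: if the procedure exhausts every admissible link in $P^{bin}$ and then in $Q^{bin}$ and the closed loop is still unstable, then no controller with signal structure $(Q^{bin},P^{bin})$ can stabilize $G$. I would first record that the procedure terminates, since each binary entry equal to one is designed at most once and there are finitely many of them.

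The central device is to track the closed-loop characteristic polynomial through the sequential loop closures and to exploit the no-pole-zero-cancellation requirement. Writing the controller as $K=(I-Q)^{-1}P$ in feedback with $G$, the closed-loop poles are the roots of the numerator of the return difference together with the open-loop pole polynomial; the requirement that each designed link introduce no pole-zero cancellation in the relevant product guarantees that the interconnection stays internally minimal at every stage, so that no unstable mode is ever hidden by cancellation and input-output stability of the transfer function each link \emph{sees} coincides with internal stability of the partial closed loop. I would then show that, under this condition, each link design relocates only those poles reachable and observable through that link and leaves the remaining modes fixed, and in particular cannot resurrect a mode already driven into the open left half plane by an earlier closure — establishing that the set of surviving unstable modes is monotonically nonincreasing as links are added.

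With this in hand, I would argue that the unstable modes surviving after all links are designed are structurally fixed. Concretely, I would show that the entries of $Q$ and $P$ permitted by the binary patterns enter only those factors of the closed-loop characteristic polynomial that correspond to modes simultaneously reachable and observable through at least one admissible signal-structure link, so that the remaining unstable roots are invariant under every admissible choice of the free controller parameters. These invariant unstable roots are exactly the fixed modes of the configuration relative to the constraint $(Q^{bin},P^{bin})$, generalizing the decentralized fixed modes of \cite{wang_davison,siljak:decentralized_control} to the signal-structure setting. Since no admissible controller can move a fixed mode, the persistence of an unstable fixed mode after Procedure $\mathbb{P}$ certifies that no controller of the prescribed structure stabilizes $G$, which is the desired conclusion.

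The main obstacle I anticipate is this invariance claim: proving rigorously that, after the no-cancellation loop closures, the residual unstable roots cannot be displaced by any admissible parameterization of $(Q,P)$. This amounts to a fixed-mode characterization adapted to the DSF sparsity constraint, and the delicate point is that the pattern $Q^{bin}$ constrains the controller's internal interconnections rather than its transfer function directly, so the usual state-space fixed-mode arguments must be recast in terms of which pole factors the allowed $Q$ and $P$ entries can reach. Care will also be needed to confirm that the order in which links are chosen does not affect which modes ultimately survive, so that exhausting the links in any admissible order yields the same certificate of infeasibility.
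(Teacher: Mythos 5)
Your plan reproduces the paper's architecture --- any delivered controller is stabilizing by construction, each new link moves only the modes it can both see and control, cancellation avoidance keeps the accounting honest, and everything reduces to showing that unstable modes surviving the procedure cannot be moved by any admissible controller --- but the step you yourself flag as the ``main obstacle'' is the entire mathematical content of the theorem, and the route you sketch toward it is not merely incomplete, it is false as stated. It is not true that the admissible entries of $(Q,P)$ can only enter those characteristic-polynomial factors corresponding to modes that are reachable and observable through at least one admissible link, when reachability and observability are assessed on the open-loop plant $G$. Two links, neither of which can individually both see and control an unstable mode, can jointly move it whenever the plant supplies a path connecting them: if the mode is controllable only from $u_i$ and observable only at $y_j$, and the structure admits a link from $y_j$ to $u_k$ and a link from $y_l$ to $u_i$ with a nonzero plant path from $u_k$ to $y_l$, then the composite loop $y_j \rightarrow u_k \rightarrow y_l \rightarrow u_i$ both observes and controls the mode, and generically displaces it. So a per-link, open-loop invariance criterion cannot certify infeasibility; the decentralized fixed-mode theory you invoke handles this correctly only through its subset-type rank characterizations, not through the per-link condition your polynomial bookkeeping would deliver.

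The paper avoids this trap by never making a global, open-loop claim. Its Lemma~\ref{lem:cannotAffect} is a statement about a \emph{single} SISO link, proved in state space: Kalman-decompose the system that the link sees, and the closed-loop matrix is block triangular, so modes not both controllable and observable from that link are invariant under any design of that link. The crucial twist is that this lemma is applied \emph{sequentially} against the partial closed loop rather than against $G$: once the $P$ links are placed, a $Q$ link is assessed against $S=(I-PG)^{-1}$ (Lemma~\ref{lem:pzcancellation}), so a mode invisible to every link in open loop may legitimately become visible to a later link --- exactly the situation an open-loop criterion misclassifies. Cancellation avoidance, justified through the internal-stability result of Theorem~\ref{mimo_stability}, then guarantees that the procedure's particular choices realize the generic (maximal) controllability/observability pattern, which is also what dissolves your order-of-links worry: if a mode is visible to a link for \emph{some} choice of the previously designed links, the no-cancellation choices keep it visible. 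Finally, Lemma~\ref{lem:existence} converts ``a surviving unstable mode is invisible to every link'' into ``no admissible controller exists.'' To complete your own plan you would need precisely these two missing pieces: a single-link invariance lemma in place of the global factor-tracking claim, and a sequential, closed-loop formulation of which modes each link can touch.
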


This theorem says that if the controller obtained using this procedure does not stabilize the plant, then there is no controller of the given structure that can stabilize it. Hence, this procedure provides a test for the existence of a structured stabilizing controller, and if such a controller exists, it synthesizes a nominal stabilizing controller that meets the structural constraint. Before proving this theorem, we will prove some lemmata.

\begin{lemma}
\label{lem:cannotAffect}
Let $K$ be the controller transfer function. A link $k_{ij}$ cannot affect a mode of the plant $G$ that is not observable or controllable from this link.
\end{lemma}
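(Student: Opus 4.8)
The plan is to prove the lemma at the level of a state-space realization using the Popov--Belevitch--Hautus (PBH) eigenvector test, since ``affecting a mode'' ultimately means relocating an eigenvalue of the closed-loop dynamics matrix. First I would fix a realization $(A,B,C)$ of the plant $G$, write $B_i$ for the $i$-th column of $B$ and $C_j$ for the $j$-th row of $C$, and let the single link $k_{ij}$ be realized by $(A_k,B_k,C_k,D_k)$ so that it feeds $y_j=C_jx$ back into the input $u_i$. Activating only this link, the closed-loop dynamics matrix is
$$
\bar{A}=\begin{bmatrix} A+B_iD_kC_j & B_iC_k \\ B_kC_j & A_k \end{bmatrix},
$$
and the claim reduces to: any eigenvalue $\lambda$ of $A$ that is uncontrollable from input $i$ or unobservable from output $j$ remains an eigenvalue of $\bar{A}$ for every realization of $k_{ij}$.

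For the uncontrollable case, the PBH test supplies a left eigenvector $w$ with $w^{T}A=\lambda w^{T}$ and $w^{T}B_i=0$. I would then verify by direct multiplication that $[\,w^{T}\ \ 0\,]$ is a left eigenvector of $\bar{A}$ with the same eigenvalue $\lambda$: the $B_i$-dependent blocks are annihilated by $w^{T}B_i=0$, so the controller states decouple and $\lambda$ is untouched. Symmetrically, for the unobservable case the PBH test gives a right eigenvector $v$ with $Av=\lambda v$ and $C_jv=0$, and $[\,v^{T}\ \ 0\,]^{T}$ is then a right eigenvector of $\bar{A}$ with eigenvalue $\lambda$, because every block that couples into the controller passes through $C_jv=0$.

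Since $\lambda$ persists as a closed-loop eigenvalue independently of $(A_k,B_k,C_k,D_k)$, no design of the transfer function $k_{ij}$ can move it; equivalently, such a mode never appears as a pole of the scalar loop transfer function $g_{ji}=C_j(sI-A)^{-1}B_i$ that the link ``sees,'' so it is invisible to that feedback path.

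The main obstacle is handling the dynamic nature of the link rather than a static gain: a scalar-gain argument would give a clean rank-one update $A+B_i k_{ij} C_j$, but a genuine transfer function introduces its own states, so I must work with the augmented matrix $\bar{A}$ and show the eigenvector extends by padding with zeros. The observation that makes this routine is that either $w^{T}B_i=0$ or $C_jv=0$ severs exactly the coupling between the plant mode and the controller block, so the padded vector remains an eigenvector. A secondary point to state carefully is the interpretation of ``cannot affect'': the lemma should be read as asserting that $\lambda$ cannot be relocated (and in particular cannot be stabilized) by $k_{ij}$ alone, which is precisely the fixed-mode phenomenon that the sequential procedure must respect.
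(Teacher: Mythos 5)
Your proof is correct, and it takes a genuinely different route from the paper's. The paper fixes a realization of $G$, applies the Kalman decomposition with respect to the single link (splitting the state into controllable/observable, controllable/unobservable, uncontrollable/observable, and uncontrollable/unobservable parts), forms the closed-loop matrix with the link's realization $(A_k,B_k,C_k)$, and then exhibits a permutation putting that matrix in block upper-triangular form; the diagonal blocks $A_{c\bar{o}}$, $A_{\bar{c}o}$, $A_{\bar{c}\bar{o}}$ do not involve $(A_k,B_k,C_k)$, so every eigenvalue of the uncontrollable-or-unobservable part, counted with multiplicity, remains a closed-loop eigenvalue regardless of the link design. Your PBH argument reaches the same conclusion mode by mode, and your two block computations do check out: $w^TB_i=0$ annihilates both the $B_iD_kC_j$ and $B_iC_k$ blocks, so $[\,w^T\ \ 0\,]$ is a left eigenvector of $\bar{A}$, and symmetrically $C_jv=0$ kills the couplings into the controller states. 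The trade-off is this: your route is shorter and more elementary, requiring only the PBH eigenvector test and no decomposition or permutation bookkeeping; the paper's route gives slightly more, namely that the whole characteristic-polynomial factor attached to the fixed modes is invariant (Jordan structure and multiplicities included), whereas the padded-eigenvector argument by itself only shows that each such $\lambda$ persists as some eigenvalue of $\bar{A}$. For the way the lemma is used downstream---an unstable mode invisible to the link stays in the closed-loop spectrum, hence the link cannot stabilize it---your version is fully sufficient, and it is essentially the classical fixed-mode argument of Wang and Davison that the paper cites as motivation.
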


\begin{proof}
Let, $$G=\left[\begin{array}{c|c} A & B \\ \hline C & D \\ \end{array}\right] \text{ and } k_{ij}=\left[\begin{array}{c|c}A_k & B_k \\ \hline C_k & 0  \end{array}\right].$$ Since we are only adding one link, both of these systems are SISO.
Using the Kalman decomposition on $G$, we can transform it such that 
$$A = \begin{bmatrix} A_{co} & 0 & A_{\times o} & 0 \\ 
A_{c \times} & A_{c\bar{o}} & A_{\times \times} & A_{\times \bar{o}} \\ 
0 & 0 & A_{\bar{c}o} & 0 \\ 
0 & 0 & A_{\bar{c} \times} & A_{\bar{c} \bar{o}} \end{bmatrix}, 
B = \begin{bmatrix}B_{co} \\ B_{\bar{c}o} \\ 0 \\ 0\end{bmatrix}$$ 
$$C = \begin{bmatrix} C_{co} & 0 & C_{c \bar{o}} & 0\end{bmatrix}, \text{ and } D=d.$$ Here, the eigenvalues of $A_{c\bar{o}}$, $A_{\bar{c}o}$, and $A_{\bar{c}\bar{o}}$ are the modes of $G$ that are unobservable, uncontrollable, and both respectively from feedback link $k_{ij}$.

The closed loop modes are given by the eigenvalues of the following matrix: 
\begin{align*}A_{cl} &= \begin{bmatrix}A & BC_k \\ B_kC & A_k+B_kDC_k\end{bmatrix}\\ 
	&= \left[\begin{array}{ccccc} 
A_{co} & 0 & A_{\times o} & 0 & B_{co}C_k \\ 
A_{c \times} & A_{c\bar{o}} & A_{\times \times} & A_{\times \bar{o}} & B_{c \bar{o}}C_k \\
0 & 0 & A_{\bar{c}o} & 0 & 0\\
0 & 0 & A_{\bar{c} \times} & A_{\bar{c} \bar{o}} & 0 \\
B_kC_{co} & 0 & B_k C_{c\bar{o}} & 0 & A_k+B_kDC_k
 \end{array}\right] \end{align*}
Transforming this matrix using the permutation $$T=\begin{bmatrix}0 & 1 & 0 & 0 & 0\\1 & 0 & 0 & 0 & 0\\0& 0 &0 & 0& 1 \\0 & 0 & 0 & 1 & 0\\0 & 0 & 1 & 0 & 0\end{bmatrix},$$ we get,
\begin{align*}
A_{clT} &= TA_{cl}T'\\ &=
\left[\begin{array}{ccccc} 
A_{c\bar{o}}&A_{c \times} 	& B_{c \bar{o}}C_k  	& A_{\times \bar{o}} 	& A_{\times \times}\\
0 		& A_{co}		& B_{co}C_k 		& 0 				& A_{\times o} \\ 
0		& B_kC_{co} 	& A_k+B_kDC_k	 	& 0 				& B_k C_{c\bar{o}} \\
0 		& 0			&  0				& A_{\bar{c} \bar{o}} 	& A_{\bar{c} \times} \\
0 		& 0 			& 0		 		& 0 				& A_{\bar{c}o}
 \end{array}\right]
\end{align*}
We can see that $A_{clT}$ is block triangular, and the uncontrollable or unobservable modes, namely the eigenvalues of $A_{\bar{c}o}, A_{c\bar{o}}$, and $A_{\bar{c}\bar{o}}$, are not affected by the choices of $A_k, B_k$, or $C_k$.
\end{proof}

This result shows that when a controller link is added to the system such that it stabilizes all the modes that it can control and observe, it cannot destabilize other modes of the system that are already stable. Now, the following lemma gives a necessary and sufficient condition for the existence of the controller with transfer function structure $K^{bin}$.

\begin{lemma}
\label{lem:existence}
There exists a controller with pattern $K^{bin}$ that stabilizes a plant $G$ if and only if every unstable mode of $G$ is controllable and observable from at least one link $k_{ij}$, $k^{bin}_{ij} = 1$.
\end{lemma}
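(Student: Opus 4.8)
The plan is to prove the two implications separately, using Lemma~\ref{lem:cannotAffect} as the main tool and reading ``controllable and observable from link $k_{ij}$'' as ``the mode is controllable from input $u_i$ and observable from output $y_j$,'' i.e.\ it survives in the minimal part of the scalar transfer function $g_{ji}$ around which that link closes a loop. Write $w$ and $v$ for the left and right eigenvectors of $A$ at the mode $\lambda$, so that controllability from $u_i$ reads $w^{\top}b_i\neq 0$ and observability from $y_j$ reads $c_j^{\top}v\neq 0$.

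For \emph{sufficiency} I would argue constructively, essentially validating Procedure~$\mathbb{P}$. Suppose every unstable mode of $G$ is controllable and observable from at least one admissible link. Assign each unstable mode to one such link and design the links one at a time: when a link $k_{ij}$ is designed, place \emph{all} modes it can control and observe into the stable region by SISO pole placement on the minimal part of $g_{ji}$, which is possible precisely because those modes are controllable and observable from that channel. By Lemma~\ref{lem:cannotAffect}, every mode not controllable or not observable from $k_{ij}$ is untouched, so already-stable and already-stabilized modes are preserved. Since output feedback preserves the controllability and observability of a mode from a given channel, the assignments remain valid as links accumulate, and induction over the links yields a controller of pattern $K^{bin}$ that stabilizes $G$.

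For \emph{necessity} I would prove the contrapositive: if some unstable $\lambda$ is, for \emph{every} admissible link $k_{ij}$, either uncontrollable from $u_i$ or unobservable from $y_j$, then $\lambda$ is a closed-loop mode for every controller with pattern $K^{bin}$, hence a fixed mode, and no admissible controller stabilizes $G$. The hypothesis says that no admissible $(i,j)$ satisfies both $w^{\top}b_i\neq 0$ and $c_j^{\top}v\neq 0$: no admissible link joins a $\lambda$-controllable input to a $\lambda$-observable output. I would then show this forces the feedback loop transmission around $\lambda$ to vanish identically in $K$, so that $\det(sI-A_{cl})$ retains the factor associated with $\lambda$ for every admissible $K$. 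Concretely, I would form $A_{cl}=\begin{bmatrix}A+BD_kC & BC_k\\ B_kC & A_k\end{bmatrix}$ and exhibit a $\lambda$-invariant structure surviving for all structured $(A_k,B_k,C_k,D_k)$, via a simultaneous Kalman-type decomposition that isolates the $\lambda$-subspace from all admissible channels at once, in the spirit of the single-link decomposition used to prove Lemma~\ref{lem:cannotAffect}.

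The hard part is this last step. Lemma~\ref{lem:cannotAffect} only rules out a \emph{single} link moving $\lambda$; here I must rule out that a \emph{combination} of admissible links, and in particular a \emph{dynamic} controller whose internal states can route signals between channels, closes a feedback cycle around $\lambda$ when no single link does. The delicate point is that the sparsity constraint lives on the transfer function $K$ rather than on a realization $(A_k,B_k,C_k,D_k)$, so I would either work with a minimal realization whose input/output incidence respects $K^{bin}$ and track that controller states driven by $\lambda$-observable outputs cannot re-enter $\lambda$-controllable inputs, or argue at the transfer-function level through a return-difference factorization showing that the $U_c\times Y_o$ block of $K$ never enters the loop, so the gain seen by $\lambda$ is zero for all admissible $K$. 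This is exactly the subtlety that makes fixed-mode characterizations nontrivial, and it is where the bulk of the work lies.
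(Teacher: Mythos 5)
Your sufficiency argument is essentially the paper's own: add the links one at a time, let each link stabilize every mode it can both control and observe, and invoke Lemma~\ref{lem:cannotAffect} to conclude that modes invisible to the new link (in particular, the already-stabilized ones) are untouched. One caveat: your claim that ``output feedback preserves the controllability and observability of a mode from a given channel'' is precisely what can fail, through exact pole-zero cancellation when a loop is closed; the paper does not take this for granted but introduces Lemma~\ref{lem:pzcancellation} specifically to rule such cancellations out.

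The genuine gap is in the necessity direction, and you located it exactly: Lemma~\ref{lem:cannotAffect} forbids a \emph{single} link from moving a mode it cannot jointly control and observe, but says nothing about several admissible links acting together through the plant's cross-coupling. You defer this step as ``where the bulk of the work lies.'' Be aware that no argument can close it, because the implication is false. Take
\[
G(s)=\begin{bmatrix} 0 & \frac{1}{s+2} \\ \frac{1}{s-1} & 0 \end{bmatrix},
\qquad
K^{bin}=\begin{bmatrix}1&0\\0&1\end{bmatrix},
\]
realized by $A=\mathrm{diag}(1,-2)$, $B=I$, $C=\bigl[\begin{smallmatrix}0&1\\1&0\end{smallmatrix}\bigr]$. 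The unstable mode at $s=1$ is controllable only from $u_1$ and observable only from $y_2$, so it is not controllable \emph{and} observable from either admissible link; indeed $g_{11}=g_{22}=0$, so each diagonal link sees nothing at all, and Procedure $\mathbb{P}$ would terminate without doing anything. Yet the diagonal feedback $u_1=y_1$, $u_2=-3y_2$ gives $A+BKC=\bigl[\begin{smallmatrix}1&1\\-3&-2\end{smallmatrix}\bigr]$, whose characteristic polynomial $s^2+s+1$ is Hurwitz, and rolling these gains off with a fast stable pole (e.g.\ multiplying each by $N/(s+N)$ for large $N$) yields a strictly proper diagonal stabilizing controller. The stabilization happens through the cycle $u_1\to y_2\to u_2\to y_1\to u_1$, even though every individual loop gain is zero. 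You should also know that the paper's own proof contains the very hole you refused to paper over: it passes from Lemma~\ref{lem:cannotAffect}, a statement about one link in isolation, to a claim about \emph{every} controller with the given pattern in a single sentence. The correct characterization of stabilizability under such structural constraints is the fixed-mode theory of Wang and Davison cited in the paper's related-work section; joint controllability and observability through a single admissible link is sufficient for an unstable mode to be movable, but, as the example shows, it is not necessary. So your instinct that this step is ``exactly the subtlety that makes fixed-mode characterizations nontrivial'' was correct, and following it to its conclusion would have revealed that the lemma, as stated, cannot be proved.
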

\begin{proof}
From Lemma \ref{lem:cannotAffect}, we know that a link in the feedback controller cannot affect the uncontrollable or unobservable modes. Hence, any controller that stabilizes a given $G$ must have links such that all the unstable modes are both controllable and observable from at least one of the controller link. Also, if every unstable mode is controllable and observable from some controller links, these links can stabilize the plant.
\end{proof}

lemmata \ref{lem:cannotAffect} and \ref{lem:existence} allow us to add links in $P$, since adding a link in $P$ cannot change the controllability/observability of the plant for the other links in $P$. However, adding these links might cause the links in $Q$ to lose controllability or observability of some of the modes, because links in $Q$ are added on top of the links in $P$. Also, the links in $Q$ themselves can create controllability/observability issues for subsequent links in $Q$.

Loss of observability/controllability can happen for two reasons: structurally or by exact cancellations. If it happens because of structural reasons, the system stays uncontrollable/unobservable for any choice of $P$ or $Q$ as long as it has the same structure. However, if the problem occurs because of exact cancellations, we can avoid these issues by a proper choice of the transfer function. Lemma \ref{lem:pzcancellation} provides a methodology to design $P$ and $Q$ such that these cancellations are prevented. We will use the following result from \cite{mimostability} to prove the lemma.
\begin{theorem} \label{mimo_stability}
Let $G$, $H$ be proper rational transfer function matrices and suppose that $det[I+G(\infty)H(\infty)]\ne0$. Then all the poles of the transfer function matrix $$W=\begin{bmatrix} (I+HG)^{-1} & -H(I+GH)^{-1} \\ G(I+HG)^{-1} & (I+GH)^{-1}\end{bmatrix}$$ are stable if and only if 
\begin{itemize}
\item $GH$ has no unstable pole-zero cancellation, and
\item all the poles of $(I+GH)^{-1}$ are stable.
\end{itemize}
\end{theorem}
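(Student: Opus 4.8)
The plan is to read $W$ as the transfer matrix of a standard feedback interconnection and to reduce the pole condition to a Hurwitz condition on a minimal closed-loop realization. First I would fix minimal state-space realizations $G=(A_G,B_G,C_G,D_G)$ and $H=(A_H,B_H,C_H,D_H)$ and verify that $W$ is the map from the two external injection signals $(v_1,v_2)$ to the two internal loop signals $(e_1,e_2)$ defined by $e_1=v_1-He_2$ and $e_2=v_2+Ge_1$; solving these gives exactly the four blocks of $W$ after the push-through identities $G(I+HG)^{-1}=(I+GH)^{-1}G$ and $(I+HG)^{-1}H=H(I+GH)^{-1}$. The hypothesis $\det[I+G(\infty)H(\infty)]\neq 0$ guarantees the interconnection is well posed, so $W$ is proper and each block is a proper rational matrix. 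Stacking the two realizations into a state $x=(x_G,x_H)$ yields a single closed-loop matrix $A_{cl}$ shared by all four blocks, so the poles of $W$ are contained in the spectrum of $A_{cl}$.

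The decisive structural observation is that this stacked realization is both controllable and observable from the external ports of $W$: each external signal $v_i$ is injected directly at the input of a subsystem and each internal signal $e_i$ is precisely a subsystem input that is read out, so no mode of $A_G$ or $A_H$ can be hidden simultaneously from $(v_1,v_2,e_1,e_2)$. Consequently the poles of $W$ are \emph{exactly} the eigenvalues of $A_{cl}$, and the theorem becomes the statement that $A_{cl}$ is Hurwitz if and only if the two listed conditions hold. To connect $A_{cl}$ to the conditions I would invoke the multivariable return-difference identity $\det(I+G(s)H(s))=\chi_{cl}(s)/\chi_{ol}(s)$, where $\chi_{ol}(s)=\det(sI-A_G)\det(sI-A_H)$ and $\chi_{cl}(s)=\det(sI-A_{cl})$ up to a nonzero constant.

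For sufficiency I would argue that every closed-loop mode in the closed right half plane must come from one of two sources. Either it is a zero of the return difference $\det(I+GH)$, in which case it is a pole of $(I+GH)^{-1}$ and is excluded by the second hypothesis; or it is an open-loop pole (a root of $\chi_{ol}$) that closing the loop fails to relocate. The return-difference identity shows the loop can fail to move exactly those open-loop poles that are cancelled by zeros in the series product $GH$, i.e. the pole-zero cancellations of $GH$, and the first hypothesis removes all such cancellations in the closed right half plane. Hence no closed-loop mode lies in the closed right half plane and $W$ is stable. For necessity, stability of $W$ immediately gives stability of its $(2,2)$ block $(I+GH)^{-1}$, establishing the second condition; the first condition I would prove by contraposition, exactly paralleling the Kalman-decomposition argument of Lemma \ref{lem:cannotAffect}.

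The main obstacle is this necessity step: an unstable pole-zero cancellation in the series product $GH$ produces a mode that is hidden (uncontrollable or unobservable) in $GH$ alone, and I must show it is nevertheless controllable-and-observable in the full closed loop and so surfaces as an unstable pole of $W$. The point is that the cross blocks $G(I+HG)^{-1}$ and $H(I+GH)^{-1}$, together with the direct injections, provide signal paths that bypass the very cancellation that hid the mode in the forward product, so a mode that was invisible to the series cascade becomes visible at one of the external ports. Making this precise requires a careful PBH test on $A_{cl}$ that tracks, port by port, which cancelled mode is exposed by which block; this bookkeeping, rather than any deep analytic fact, is the crux of the proof.
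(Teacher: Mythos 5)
First, the point of comparison: the paper does not prove Theorem \ref{mimo_stability} at all --- its entire proof reads ``See \cite{mimostability} Theorem 5,'' importing the result as a black box to support Lemma \ref{lem:pzcancellation}. Your self-contained state-space argument is therefore not a variant of the paper's proof but a replacement for a citation. The route you choose is the standard one for this Desoer--Chan-type result, and its skeleton is sound: interpreting $W$ as the two-port map $(v_1,v_2)\mapsto(e_1,e_2)$ of the loop is correct (your loop equations and push-through identities check out), $\det[I+G(\infty)H(\infty)]\ne 0$ gives well-posedness, and stacking \emph{minimal} realizations of $G$ and $H$ does yield a minimal realization of $W$, for exactly the reason you sketch: the port map is an invertible feedback transformation of the subsystem inputs, and the subsystem inputs and outputs are recoverable from the port signals. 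That minimality claim --- poles of $W$ equal the eigenvalues of $A_{cl}$ exactly --- is the heavy lifting, and you have it.

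What you misjudge is where the remaining work lies. The step you defer as the ``main obstacle'' (port-by-port PBH bookkeeping to show a cancelled unstable mode of $GH$ surfaces in $W$) needs no bookkeeping: a hidden mode of the cascade realization of $GH$ is invariant under output feedback --- precisely the Kalman-decomposition argument of Lemma \ref{lem:cannotAffect}, since a left eigenvector annihilating the cascade input matrix (or a right eigenvector annihilated by its output matrix) still does so after the loop is closed --- so an unstable cancellation remains an unstable eigenvalue of $A_{cl}$, and by your own minimality claim it is then a pole of $W$. Conversely, it is your sufficiency paragraph that is genuinely loose: ``open-loop poles the loop fails to relocate'' is not an argument, and equating zeros of $\det(I+GH)$ with poles of $(I+GH)^{-1}$ fails at points where a McMillan pole and a McMillan zero of $I+GH$ coincide and cancel inside the determinant, so the dichotomy must be run on the Smith--McMillan pole and zero polynomials, not on the coprime-form determinant. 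The cleaner repair uses the same two facts as above: condition (i) makes the cascade realization of $GH$ stabilizable and detectable; feedback preserves hidden modes, so the realization of $(I+GH)^{-1}$ with state matrix $A_{cl}$ is also stabilizable and detectable; hence every unstable eigenvalue of $A_{cl}$ would appear as a pole of $(I+GH)^{-1}$, which condition (ii) forbids, so $A_{cl}$ is Hurwitz and all four blocks of $W$ are stable. With these two adjustments your outline closes both directions and, unlike the paper, is self-contained.
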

\begin{proof}
See \cite{mimostability} Theorem 5.
\end{proof}

\begin{lemma}\label{lem:pzcancellation}
Loss of controllability/observability can be prevented from each link in $Q$ if pole-zero cancellations are avoided in $PG$ and $QS$. Here, $S$ is the closed loop transfer function that $Q$ observes and controls.
\end{lemma}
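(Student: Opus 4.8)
The plan is to exploit the standard correspondence between a mode being hidden (uncontrollable or unobservable) in a feedback interconnection and a pole-zero cancellation in the loop product, and then to make this precise at the two stages of the construction --- forming the closed loop $S$ from $G$ and $P$, and subsequently closing $Q$ around $S$ --- by applying Theorem \ref{mimo_stability} at each stage. First I would fix what each link ``sees'': a link $p_{ij}$ is designed in feedback around the scalar transfer function $g_{ji}$, whereas a link $q_{ij}$ is designed in feedback around the corresponding scalar element of $S$, the closed-loop system obtained after all $P$ links have been installed. The modes a given SISO link can control and observe are exactly the poles that survive in the scalar transfer function it sees; such a pole can disappear either structurally --- in which case it is absent for every admissible choice of transfer functions with the pattern $(Q^{bin},P^{bin})$, and hence is beyond repair by any design --- or through an exact pole-zero cancellation, which depends on the particular functions chosen. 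Only the latter is preventable, and it is precisely what the lemma asserts can be avoided.

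The key step is to identify, via Theorem \ref{mimo_stability}, the sole mechanism by which an unstable mode becomes hidden in a feedback loop. Taking $G$ as the plant and $H$ as the diagonal controller $P$, the theorem states that the aggregate closed-loop map $W$ of the interconnection has all of its poles stable exactly when $PG$ has no unstable pole-zero cancellation, together with stability of the associated sensitivity function. Reading this mode-by-mode, an unstable mode of $G$ that is controllable and observable from a link is lost from the interconnection if and only if it is annihilated by such a cancellation; thus avoiding pole-zero cancellations in $PG$ guarantees that every still-unstable mode remains controllable and observable in $S$, so that the $Q$ links inherit an $S$ in which all the relevant modes are visible. I would then repeat the argument verbatim at the second stage, now taking $G:=S$ and $H:=Q$ in Theorem \ref{mimo_stability}: avoiding cancellations in $QS$ ensures that closing the $Q$ loop likewise hides no mode of $S$, so each $Q$ link retains controllability and observability of exactly the modes present in the element of $S$ it sees. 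Since the $Q$ links are added one on top of another, I would run this second stage as an induction over the links of $Q$, at each step treating the current closed-loop system as the ``$S$'' seen by the next link, which by the inductive hypothesis still contains every mode that was not lost structurally.

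The main obstacle will be converting Theorem \ref{mimo_stability}, which is phrased as a statement about the stability of the aggregate transfer function $W$, into the per-mode, per-link claim the lemma requires --- namely the equivalence between the absence of a pole-zero cancellation in the loop product and no individual mode becoming uncontrollable or unobservable from the link in question. The crux is to argue that an unstable mode lost to an exact cancellation in $PG$ (respectively $QS$) is exactly an unstable hidden mode of the interconnection in the sense of Lemma \ref{lem:cannotAffect}, and conversely that, absent such cancellations, the closed-loop realization is mode-preserving relative to its open-loop constituents. Once this correspondence is in hand, combining it with the structural-versus-cancellation dichotomy --- structural loss being independent of the transfer-function choice and therefore outside the scope of what any design with the given pattern could repair --- completes the proof.
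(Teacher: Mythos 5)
Your proposal follows essentially the same route as the paper's own proof: identify $S=(I-PG)^{-1}$ as the plant seen by $Q$, apply Theorem \ref{mimo_stability} to conclude that absent cancellations in $PG$ all system poles remain visible in $S$, and then repeat the argument with $Q$ closed around $S$. The per-mode reading of the theorem and the link-by-link induction you flag as the main obstacles are precisely the steps the paper itself glosses over (it simply asserts that ``this transfer function has all the poles of the system''), so your plan is, if anything, a more honest account of the same argument.
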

\begin{figure}[ht]
\centering
\includegraphics[scale=.8]{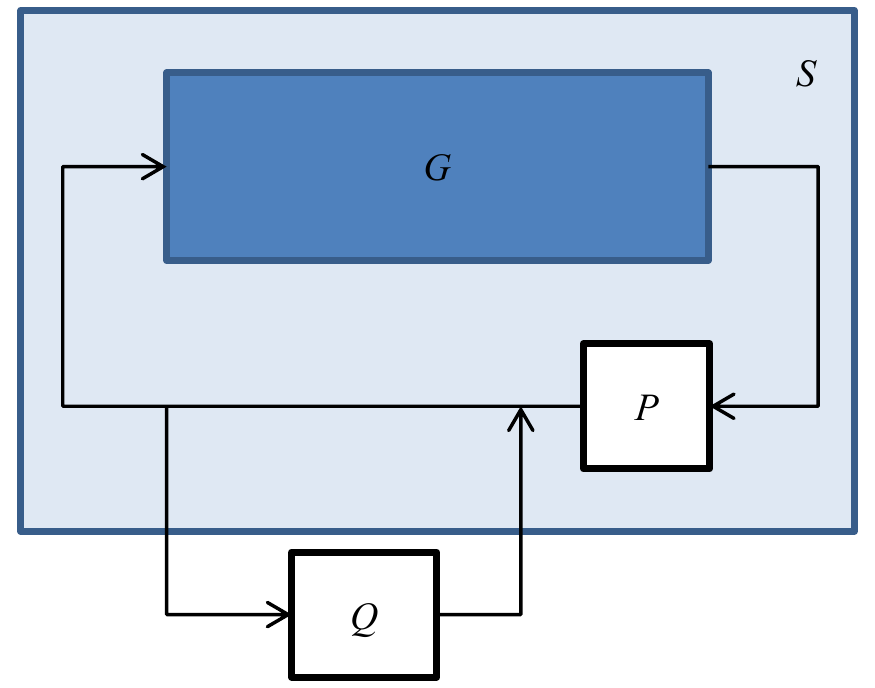}
\caption{After designing $P$, the plant as seen by $Q$ is given by $S=(I-PG)^{-1}$.}
\label{fig:qcl}
\end{figure}

\begin{proof}
The transfer function that $Q$ observes for the closed loop system formed by adding $P$ in feedback with $G$ is given by $S=(I-PG)^{-1}$ as shown in Figure \ref{fig:qcl}. 
Using the Theorem \ref{mimo_stability}, since there is no pole zero cancellations in $PG$, the closed loop system is stable if and only if $S$ is stable. Which says that this transfer function has all the poles of the system. Hence $Q$ observes and controls all the poles of the system after adding all the links in $P$ if there is no pole zero cancellation in $PG$.

Similarly, when adding the links in $Q$ if there is no pole zero cancellation in $QS$ the controllability and observability properties are maintained. That is, if a mode is observable/controllable from a link $Q_{ij}$ for some choices of the other links in the controller, then choosing the links in this fashion will keep the mode observable/controllable from $Q_{ij}.$
\end{proof}

Now we will present the proof of Theorem 1:

\begin{proof}
For every controller link that is added, either in $P$ or $Q$, it stabilizes all the modes that are controllable and observable. Also, by Lemma \ref{lem:cannotAffect}, a newly added link cannot destabilize a mode that was already stable. Hence with every new link added to the system, the number of unstable modes either decreases or stays the same. 

If every unstable mode in the system is controllable and observable by some link, it gets stabilized. If the plant has an unstable mode that is uncontrollable and unobservable from every link in $P$ and $Q$, then by Lemma \ref{lem:existence}, there is no controller with the given pattern that stabilizes the plant. Also, since the added links satisfy the conditions in Lemma \ref{lem:pzcancellation}, if a mode is controllable or observable from a link for any choices previously added links, then it is controllable and observable.
\end{proof}

\section{Specific Examples}
In this section we use Procedure $\mathbb{P}$ to identify plants that are stabilizable or not stabilizable by controllers with some specific structural constraints.

\subsection{Controllers with a cyclic structure}
A cycle in the controller can be represented by the following binary constraints: \begin{align*}
P_{cyl}^{bin}&=\begin{bmatrix}1 & 0 & 0 & 0\\0 & 1 & 0 & \vdots\\0 & 0 & \ddots & 0 \\ 0 & \cdots & 0& 1 \end{bmatrix}_{n\times n} \text{ and },\end{align*}
\begin{align*}Q_{cyl}^{bin}&=\begin{bmatrix}0 & 1 & 0 & 0  & 0\\ 0 & 0 & 1 & 0 & \vdots\\ 0 & 0 & 0 & \ddots & 0\\  0 & 0 &  \cdots & 0 & 1\\ 1 & 0 & 0 & \cdots & 0\end{bmatrix}_{n \times n}.\end{align*} For such constraints on the controller we can prove the following result.
\begin{corollary}
If an $n \times n$ plant is detectable and stabilizable, there always exists a stabilizing controller with the structure $(Q_{cyl}^{bin}, P_{cyl}^{bin})$ .
\end{corollary}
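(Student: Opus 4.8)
The plan is to reduce the corollary to an existence question and settle it with Lemma \ref{lem:existence}. By Theorem 1, Procedure $\mathbb{P}$ either returns a stabilizing controller with the structure $(Q_{cyl}^{bin},P_{cyl}^{bin})$ or certifies that none exists, so it suffices to show that a stabilizing controller of this structure must exist whenever $G$ is stabilizable and detectable. I would establish this by verifying the controllability/observability hypothesis of Lemma \ref{lem:existence} for the transfer-function pattern $K^{bin}$ induced by the cyclic structure.

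First I would show that the cyclic structure realizes a \emph{full} controller transfer function, so that every link $k_{ij}$ is available. Writing $P=\mathrm{diag}(p_1,\dots,p_n)$ and letting $Q$ carry the single cyclic chain, $Q$ is a scalar-weighted cyclic shift, so $(I-Q)(I+Q+\cdots+Q^{n-1})=(1-\prod_i q_i)I$ and hence
\begin{equation*}
K=(I-Q)^{-1}P=\frac{1}{1-\prod_i q_i}\bigl(I+Q+\cdots+Q^{n-1}\bigr)P.
\end{equation*}
Because the powers $Q^0,\dots,Q^{n-1}$ of a cyclic shift occupy every position, the matrix $I+Q+\cdots+Q^{n-1}$ is full, and multiplying by the nonsingular diagonal $P$ keeps it full; thus $k^{bin}_{ij}=1$ for all $i,j$, generalizing Example 2.

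Next I would translate stabilizability and detectability into the per-link modal condition required by Lemma \ref{lem:existence}. For an unstable mode $\lambda$ with left eigenvector $w$ and right eigenvector $v$, stabilizability gives $w^{*}B\neq 0$, so $w^{*}b_i\neq 0$ for some input $i$, i.e. $\lambda$ is controllable from input $i$; detectability gives $Cv\neq 0$, so $c_j v\neq 0$ for some output $j$, i.e. $\lambda$ is observable from output $j$. The link $k_{ij}$ (output $j$ fed to input $i$) therefore both controls and observes $\lambda$, and by the previous paragraph this link is present in the full pattern $K^{bin}$. Hence every unstable mode is controllable and observable from at least one link, Lemma \ref{lem:existence} guarantees a stabilizing controller of pattern $K^{bin}$, and Theorem 1 then says Procedure $\mathbb{P}$ constructs one.

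The step I expect to be the main obstacle is the gap between \emph{a stabilizing $K$ with the full pattern exists} and \emph{a stabilizing controller with the cyclic $(Q,P)$ parametrization exists}: the cyclic structure has only $2n$ free transfer functions shaping the $n^2$ entries of $K$, so it cannot realize every full $K$ (cf. the quadratic-invariance constraints (\ref{constraint})). To close this gap I would argue directly on Procedure $\mathbb{P}$: adding the diagonal $P$ links stabilizes every mode caught by some $g_{ii}$, and adding the cyclic $Q$ links on top of the residual plant $S=(I-PG)^{-1}$ supplies exactly the cross-coupling (input $i$ responding to output $j$, $i\neq j$) needed for the remaining modes, while Lemma \ref{lem:pzcancellation} ensures that avoiding pole-zero cancellations in $PG$ and $QS$ preserves the controllability/observability of each such mode from its link throughout the sequential construction. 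This is the delicate part, since it requires that the cyclic coupling never structurally hides a mode that stabilizability and detectability have made available.
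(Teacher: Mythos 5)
Your first two steps are sound: the computation $K=\bigl(1-\prod_i q_i\bigr)^{-1}(I+Q+\cdots+Q^{n-1})P$ showing that the cyclic structure yields a full controller transfer function, and the PBH argument pairing each unstable mode with some link $k_{ij}$, are both correct. You also correctly identify the resulting gap: Lemma \ref{lem:existence} applied to the full pattern $K^{bin}$ only certifies the existence of \emph{some} stabilizing $K$ with full sparsity pattern, not one realizable by the $2n$ free transfer functions of the cyclic $(Q,P)$ parametrization. The problem is that your attempt to close this gap never actually does so: asserting that the cyclic $Q$ links ``supply exactly the cross-coupling needed for the remaining modes'' restates the claim to be proved rather than proving it, and you concede as much by flagging it as the delicate step. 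Concretely, nothing in your proposal handles a mode that is controllable only from $u_i$ and observable only at $y_j$ when the directed link from controller unit $j$ to controller unit $i$ is \emph{not} one of the $n$ links of the cycle; in the sequential procedure each link can only stabilize the modes it simultaneously observes and controls, so you must exhibit a single link of the cyclic structure from which every remaining unstable mode is both observable and controllable.

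The paper closes exactly this hole with a propagation argument that your proposal lacks: because pole-zero cancellations are avoided (Lemma \ref{lem:pzcancellation}), once a link $Q_{i+1,i}$ is in place, every mode observable at $y_i$ becomes observable at $y_{i+1}$, and every mode controllable from $u_{i+1}$ becomes controllable from $u_i$. Inducting around the cycle, after all of $P$ and all but the last link of $Q$ have been added, observability and controllability have accumulated so that the \emph{last} link of the cycle observes and controls every unstable mode that detectability and stabilizability make available anywhere in the plant; that single link then stabilizes whatever remains unstable. This aggregation along the chain is the essential content of the corollary --- it is precisely what distinguishes the cyclic pattern from an arbitrary one such as the diagonal pattern, for which the paper exhibits a plant that cannot be stabilized --- and without it your argument does not go through.
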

\begin{proof}
When all the links in $P$, and all but the last one in $Q$ is added, all the remaining unstable modes of the system must be observable and controllable from th last link in $Q$. This happens because when adding links in the controller we satisfy the conditions in \ref{lem:pzcancellation} avoiding any pole zero cancellations. Hence, if a link $Q_{i+1,i}$ is added then all the modes that are observable at $y_i$ are also observable at $y_{i+1}$, and all the modes that are controllable from $u_{i+1}$ are also controllable from  $u_i$.
\end{proof}

\subsection{Systems that are not stabilizable by a diagonal controller}
We know that not all plants can be stabilized by a diagonal controllers. To study these systems one might want to generate plants that fall in this category. We can use our results to design such systems.

From Lemma \ref{lem:existence}, we know that a detectable and stabilizable plant can be stabilized by a diagonal controller if and only if a mode of the system that is controllable from input $i$ is also observable at the output $i$. Hence, a plant cannot be stabilized by a diagonal controller if there is a node that is observable only at output $i$ and controllable only from input $j$, $i\ne j$. For example, the following system cannot be stabilized by a diagonal controller:
\begin{align*}
\dot{x} &= \begin{bmatrix}1 & 0 & 0\\ 1 & 2 & 3 \\ 1 & 0 & 3\end{bmatrix}x + \begin{bmatrix}1 & 0\\0 & 1 \\ 0 & 0\end{bmatrix}\begin{bmatrix}u_1 \\ u_2\end{bmatrix}\\
\begin{bmatrix}y_1 \\ y_2\end{bmatrix}&=\begin{bmatrix}1 & 0 & 0\\0 & 1 & 0\end{bmatrix} x
\end{align*}
This system has the modes at \{1,2,3\}. Using the Popov-Belevitch-Hautus (PBH) tests for controllability and observability, we can see that the mode 3 is controllable only from input $u_1$ and observable only at output $y_2$. Hence a diagonal controller cannot satisfy the condition given in Lemma \ref{lem:existence}.
\section{Conclusion}
In this paper, we presented an algorithm to construct stabilizing controllers with a given signal structure. We also showed that if the procedure fails to produce a stabilizing controller, the plant cannot be stabilized with a controller with the given structure. 

We note that this procedure might not be a practical method for generating stabilizing controllers. This method does not provide any optimality guarantees. Also, if synthesis techniques LQG is used to construct the controller links, the order of the transfer function on these links grows exponentially. Hence, we need to develop a controller synthesis technique that produce a low order controller.  These issues will be addressed in the future research. Nevertheless, this paper introduces a new kind of decentralized control problem which is very important for networked systems, and gives a nominal solution for it. 

\section{Acknowledgment}
We gratefully acknowledge the generous support of AFRL grants FA8750-09-2-0219 and FA8750-11-1-0236. 

\bibliography{cdc12}{}
\bibliographystyle{plain}
\end{document}